\documentclass{llncs}

\usepackage{amssymb,stmaryrd,mathrsfs,dsfont,mathtools}

\input xy
\xyoption{all}
\usepackage{xfrac} 
%\pgfplotsset{compat=1.15}
%\usetikzlibrary{arrows}
\usepackage{tikz}
\tikzstyle{vertex}=[circle, draw, inner sep=0pt, minimum size=3pt]
\newcommand{\vertex}{\node[vertex]}

%\pgfplotsset{compat=1.18, width=8.1cm }

\begin{document}
\frontmatter          % for the preliminaries
\pagestyle{headings}  % switches on printing of running heads

\mainmatter              % start of the contributions

\title{Characterization of Word-Representable Graphs using Modular Decomposition}

\titlerunning{Title}  % abbreviated title (for running head)
%                                     also used for the TOC unless
%                                     \toctitle is used
%

\author{Tithi Dwary \and %\inst{1} 
K. V. Krishna} %\inst{2}

\authorrunning{Tithi Dwary \and K. V. Krishna} % abbreviated author list (for running head)

\institute{Indian Institute of Technology Guwahati, India\\
	\email{tithi.dwary@iitg.ac.in};\;\;\; 
	\email{kvk@iitg.ac.in}}

\maketitle              % typeset the title of the contribution

\begin{abstract}
In this work, we characterize the class of word-representable graphs with respect to the modular decomposition. Consequently, we determine the representation number of a word-representable graph in terms of the permutation-representation numbers of the modules and the representation number of the associated quotient graph. In this connection, we also obtain a complete answer to the open problem posed by Kitaev and Lozin on the word-representability of the lexicographical product of graphs.
\end{abstract}

\keywords{Word-representable graphs, representation number, comparability graphs, lexicographical product, modular decomposition.}

\section{Introduction and Preliminaries}

A word  over a finite set of letters, say $A$, is a finite sequence of letters of $A$ written by juxtaposing them. A subsequence $u$ of the sequence of a word $w$ is said to be a subword of the word $w$ and it is denoted by $u \ll w$. A subword of the word $w$ over a subset $B \subseteq A$, denoted by $w_B$, is obtained from $w$ by deleting all the occurrences of the letters of $w$ belonging to $A \setminus B$. 

In this paper, we consider only simple (i.e., without loops or parallel edges) and connected graphs. A graph $G = (V, E)$ is said to be a word-representable graph if there exists a word $w$ over its vertex set $V$ such that for all $a, b \in V$, $a$ and $b$ are adjacent in $G$ if and only if $w_{\{a, b\}}$ is either of the form $ababa\cdots$ or $babab\cdots$, which can be of even or odd length. In this case, we say $a$ and $b$ alternate in $w$. The class of word-representable graphs includes several fundamental classes of graphs such as comparability graphs, circle graphs, parity graphs, and 3-colorable graphs. Further, a word-representable graph is said to be a $k$-word-representable graph if there exists a word $w$ over its vertex set such that every letter appears exactly $k$ times in $w$; in this case, we say $w$ is a $k$-uniform word. A $1$-uniform word $w$ is a permutation on the set of letters of $w$. It is known that a graph is word-representable if and only if it is $k$-word-representable for some $k$ \cite{MR2467435}. The representation number of a word-representable graph $G$ is the smallest $k$ such that $G$ is $k$-word-representable, and it is denoted by $\mathcal{R}(G)$. Further, it is known that the general problem of determining the representation number of a word-representable graph is computationally hard \cite{Hallsorsson_2011}. For a detailed introduction to the theory of word-representable graphs, one may refer to the monograph by Kitaev and Lozin \cite{words&graphs}. 

A word-representable graph $G$ is said to be permutationally representable if there exists a word $w$ of the form $p_1p_2 \cdots p_k$ representing the graph $G$, where each $p_i$, $1 \le i \le k$, is a permutation of the vertices of $G$. In this case, $G$ is said to be a permutationally $k$-representable graph. The smallest $k$ such that $G$ is a permutationally $k$-representable graph is said to be the permutation-representation number (in short \textit{prn}) of $G$, and it is denoted by $\mathcal{R}^p(G)$. Note that for a permutationally representable graph $G$, $\mathcal{R}(G) \le \mathcal{R}^p(G)$.

To distinguish between edges of graphs and two-letter words, we write $\overline{ab}$ to denote an undirected edge between vertices $a$ and $b$. A directed edge from $a$ to $b$ is denoted by $\overrightarrow{ab}$. An orientation of a graph assigns a direction to each edge so that the obtained graph is a directed graph. A graph $G = (V, E)$ is said to be a comparability graph if it admits a transitive orientation, i.e., an orientation in which if $\overrightarrow{ab}$ and $\overrightarrow{bc}$, then $\overrightarrow{ac}$, for all $a, b, c \in V$. In \cite{perkinsemigroup}, it was proved that a graph is a comparability graph if and only if it is permutationally representable. Hence, the class of comparability graphs is a subclass of the class of word-representable graphs. Further, the class of word-representable graphs is characterized in terms of semi-transitive orientations, a generalization of transitive orientations \cite{Halldorsson_2016}.

The class of comparability graphs is a well studied graph class in the literature and it receives importance due to its connection with posets. Every comparability graph $G$ induces a poset, say $P_G$, based on its transitive orientation. Moreover, it is known that for a comparability graph $G$, $\mathcal{R}^p(G) = k$ if and only if $\dim(P_G) = k$ (cf. \cite{khyodeno2}). Further, it was proved that the problem of deciding whether a poset has dimension at most $k$, for $k \ge 3$, is NP-complete \cite{yannakakis1982complexity}. Accordingly, determining the \textit{prn} of a comparability graph is computationally hard.

Let $G = (V, E)$ be a graph. The neighborhood of a vertex $a \in V$ is denoted by $N_G(a)$, and is defined by $N_G(a) = \{b \in V \mid \overline{ab}\in E\}$. Further, the subgraph of $G$ induced by $A \subseteq V$ is denoted by $G[A]$. A set $M \subseteq V$ is a module of $G$ if, for any $a \in V \setminus M$, either $N_G(a) \cap M = \varnothing$ or $M \subseteq N_G(a)$. The singletons and the whole set $V$ are the trivial modules of $G$.  A graph is said to be a prime graph if it contains only trivial modules, otherwise it is said to be decomposable. We say two modules $M$ and $M'$ overlap if $M \cap M' \neq \varnothing$,  $M \setminus M' \neq \varnothing$ and  $M' \setminus M \neq \varnothing$. A module $M$ is said to be strong if it does not overlap with any other module of $G$. Further, a module $M$ is called maximal if there is no module $M'$ of $G$ such that $M \subset M' \subset V$. 
Note that any two disjoint modules $M$ and $M'$ are either adjacent, i.e., any vertex of $M$ is adjacent to all vertices of $M'$, or non-adjacent, i.e., no vertex of $M$ is adjacent to any vertex of $M'$. 

Let $\mathcal{P} = \{M_1, M_2, \ldots, M_k\}$ be a partition of the vertex set of a graph $G$ such that each $M_i$ is a module of $G$. Then $\mathcal{P}$ is said to be a modular partition of $G$. A quotient graph associated to $\mathcal{P}$, denoted by $\sfrac{G}{\mathcal{P}}$, is a graph whose vertices are in a bijection with the elements of $\mathcal{P}$ and two vertices in $\sfrac{G}{\mathcal{P}}$ are adjacent if and only if the corresponding modules are adjacent in $G$. It is evident that $\sfrac{G}{\mathcal{P}}$ is isomorphic to an induced subgraph of $G$. A modular partition of $G$ containing only maximal strong modules is called a maximal modular partition. It is known that every graph has a unique maximal modular partition (cf. \cite{habib2010survey}).

\begin{remark}\label{recons_mod_dec}
	Let $\mathcal{P} = \{M_1, M_2, \ldots, M_k\}$ be a modular partition of a graph $G$. Then, the original graph $G$ can be reconstructed from $\sfrac{G}{\mathcal{P}}$ by replacing each vertex of $\sfrac{G}{\mathcal{P}}$ with the corresponding modules $G[M_i]$, for $1 \le i \le k$.
\end{remark}

The modular decomposition of a graph was first introduced by Gallai \cite{Gallaipaper} to study the structure of comparability graphs. Further, it was used to characterize a number of graph classes viz., cographs, permutation graphs, interval graphs (cf. \cite{Golumbicbook_2004}). Other than graph theory, it has a large range of applications in the theory of posets, scheduling problems. For a detail information on this topic, one may refer to \cite{habib2010survey,Mohring_1985}. A modular decomposition of a graph can be computed in linear time \cite{McConnell_1994}. In the following, we present the structural characterization of comparability graphs with respect to the modular decomposition. 

\begin{theorem}[\cite{Golumbicbook_2004,Mohring_1985}] \label{mod_com}
	Let $\mathcal{P} = \{M_1, M_2, \ldots, M_k\}$ be a modular partition of a decomposable graph $G$. Then $G$ is a comparability graph if and only if $\sfrac{G}{\mathcal{P}}$ and each of the induced subgraphs $G[M_i]$ are comparability graphs.
\end{theorem}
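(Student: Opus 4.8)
The plan is to argue directly via the transitive-orientation characterization of comparability graphs (equivalently, one could phrase everything in terms of permutational representability, but orientations are cleaner here).

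\smallskip

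\noindent\textbf{Forward direction.} Suppose $G$ is a comparability graph and fix a transitive orientation of $G$. Both $G[M_i]$ and $\sfrac{G}{\mathcal{P}}$ are induced subgraphs of $G$: for $G[M_i]$ this is immediate, and for $\sfrac{G}{\mathcal{P}}$ it is the observation (noted just before the statement) that $\sfrac{G}{\mathcal{P}} \cong G[\{v_1,\dots,v_k\}]$ for any system of representatives $v_i \in M_i$, since disjoint modules are either completely adjacent or completely non-adjacent. So it suffices to note that comparability is hereditary: restricting a transitive orientation to the vertex set of an induced subgraph keeps it transitive, because a directed $2$-path $\overrightarrow{ab},\overrightarrow{bc}$ inside the subgraph is a directed $2$-path of $G$, hence $\overrightarrow{ac}$ is an edge of $G$ and, the subgraph being induced, of the subgraph as well.

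\smallskip

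\noindent\textbf{Backward direction.} This is the substantive half, and I would prove it by an explicit construction. Let $\tau$ be a transitive orientation of $\sfrac{G}{\mathcal{P}}$ and, for each $i$, let $\tau_i$ be a transitive orientation of $G[M_i]$. Orient $G$ as follows: inside each $M_i$ use $\tau_i$; and for an edge $\overline{ab}$ with $a \in M_i$, $b \in M_j$, $i \neq j$ — so $M_i$ and $M_j$ are adjacent modules and \emph{every} $M_i$–$M_j$ edge is present — orient $\overline{ab}$ as $\overrightarrow{ab}$ precisely when $\tau$ orients the corresponding quotient edge from the vertex for $M_i$ to the vertex for $M_j$. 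The guiding idea is that the module structure forces all cross-edges between two adjacent modules to be oriented uniformly, mirroring the quotient. One then checks transitivity of an arbitrary directed $2$-path $\overrightarrow{ab},\overrightarrow{bc}$ by how $a,b,c$ distribute among the modules: if all three lie in one $M_i$, transitivity of $\tau_i$ gives $\overrightarrow{ac}$; if $a,b,c$ lie in three distinct modules $M_i, M_j, M_l$, transitivity of $\tau$ gives the quotient edge $M_i \to M_l$, hence $\overrightarrow{ac}$; and if exactly two of them share a module, the uniform orientation between the relevant pair of modules (together with the fact that the vertex in the other module is adjacent to all of the shared module) delivers $\overrightarrow{ac}$ — with the lone exception of the pattern $a,c \in M_i$, $b \in M_j$, which cannot occur since it would force the quotient edge $M_i$–$M_j$ to point both ways. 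Hence the orientation is transitive and $G$ is a comparability graph.

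\smallskip

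\noindent\textbf{Main obstacle.} The only step needing genuine care is the case analysis in the backward direction, and within it the observation that the "bad" pattern ($a,c$ in one module, $b$ in another) is impossible; the rest is bookkeeping that repeatedly invokes the defining property of a module (any outside vertex sees all or none of it) and the adjacency dichotomy for disjoint modules. Note that neither direction uses maximality or strongness of the modules, so the argument works for an arbitrary modular partition $\mathcal{P}$; this same construction, tracked quantitatively, is what will later yield the announced formula for $\mathcal{R}^p$ in terms of the quotient and the modules.
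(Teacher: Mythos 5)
Your proof is correct. Note, however, that the paper offers no proof of this statement at all: Theorem~\ref{mod_com} is imported from the literature (\cite{Golumbicbook_2004,Mohring_1985}) as a known structural fact, so there is nothing in the text to compare your argument against line by line. Your backward direction is the classical orientation-lifting construction, and your case analysis is complete --- in particular you correctly isolate the one genuinely delicate point, that the pattern $a,c\in M_i$, $b\in M_j$ is ruled out because the uniform cross-orientation would force the quotient edge to point both ways. It is worth observing that the paper's own machinery gives a second, different route to the same backward implication: Theorem~\ref{Module_4} proves the single-replacement version $G_a[M]$ via \emph{permutational representability} (splicing a permutation $p_i'$ of $M$ into each permutation $p_i = r_i a s_i$ of $G$ to form $r_i p_i' s_i$), and iterating that replacement over all modules of $\mathcal{P}$, as in Remark~\ref{recons_mod_dec}, recovers Theorem~\ref{mod_com} together with the quantitative statement $\mathcal{R}^p(G)=\max\{\mathcal{R}^p(\sfrac{G}{\mathcal{P}}),\mathcal{R}^p(G[M_1]),\ldots,\mathcal{R}^p(G[M_k])\}$ of Theorem~\ref{ch_mod_wordgraph}(3). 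Your orientation-based argument is more elementary and self-contained for the qualitative claim; the paper's word-based construction has the advantage of directly tracking the \textit{prn}, which is the quantity the paper ultimately needs. Your closing observation that neither direction uses maximality or strongness of the modules is accurate and matches the generality of the statement as given.
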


It is evident that the modular decomposition is useful for the recognition of comparability graphs. We state the following problem for word-representable graphs.

\begin{problem}
	Characterize word-representable graphs with respect to the modular decomposition.
\end{problem}

In this paper, we address Problem 1 and as a consequence, we also determine the representation number of a word-representable graph in terms of the permutation-representation numbers of its modules and the representation number of the quotient graph. Additionally, we provide a necessary and sufficient condition under which the lexicographical product of two word-representable graphs is a word-representable graph. This gives a complete answer to the open problem posed in \cite[Chapter 7]{words&graphs}.

\section{Characterization}

In the present context, to study modular decomposition, first we consider the following operation which replaces a module in place of a vertex in a graph. Let $G = (V \cup \{a\}, E)$ and $M = (V', E')$ be two graphs. The graph $G_a[M] = (V'', E'')$ is defined by $V'' = V \cup V'$ and the edge set $E''$ consists of the edges of $G[V]$, edges of $M$ and $\{\overline{bc} \mid b \in V', c \in N_G(a)\}$. Note that $V'$ is a module in the graph $G_a[M]$. We say $G_a[M]$ is obtained from $G$ by replacing the vertex $a$ of $G$ with the module $M$.

\begin{remark}
	For two word-representable graphs $G$ and $M$, $G_a[M]$ is not always a word-representable graph. For instance, consider the graph obtained by replacing one of the two vertices of the complete graph $K_2$ with the cycle $C_5$. This graph is nothing but the wheel graph $W_5$ (see Fig. \ref{w_5}), which is not a word-representable graph (cf. \cite[Chapter 3]{words&graphs}). 
\end{remark}

\begin{figure}[ht]
	\centering
	\begin{minipage}{.2\textwidth}
		\centering
		\[\begin{tikzpicture}[scale=0.6]
			\vertex (a_8) at (0,0) [fill=black, label=below:$a$] {};
			\vertex (a_7) at (1.5,0) [fill=black, label=below:$b$] {};
			\path
			(a_7) edge (a_8);	
		\end{tikzpicture}\] 
	      $G$
	\end{minipage}%
\begin{minipage}{.4\textwidth}
	\centering
	
	\[\begin{tikzpicture}[scale=0.6]
		\vertex (a_1) at (0,1) [fill=black,label=above:$1$] {};  
		\vertex (a_2) at (2,-1) [fill=black, label=right:$2$] {};
		\vertex (a_3) at (1,-2.5) [fill=black, label=below:$3$] {};
		\vertex (a_4) at (-1,-2.5) [fill=black, label=below:$4$] {};
		\vertex (a_5) at (-2,-1) [fill=black, label=left:$5$] {};
		
		\path
		(a_1) edge (a_2)
		(a_2) edge (a_3)
		(a_3) edge (a_4)
		(a_4) edge (a_5)
		(a_5) edge (a_1);
	\end{tikzpicture}\]
   $C_5$
\end{minipage}%
	\begin{minipage}{.4\textwidth}
		\centering
		
		\[\begin{tikzpicture}[scale=0.6]
			\vertex (a_1) at (0,1) [fill=black, label=above:$1$] {};  
			\vertex (a_2) at (2,-1) [fill=black, label=right:$2$] {};
			\vertex (a_3) at (1,-2.5) [fill=black, label=below:$3$] {};
			\vertex (a_4) at (-1,-2.5) [fill=black, label=below:$4$] {};
			\vertex (a_5) at (-2,-1) [fill=black, label=left:$5$] {};
			\vertex (a_6) at (0,-1) [fill=black, label=below:$b$] {};
			\path
			(a_1) edge (a_2)
			(a_2) edge (a_3)
			(a_3) edge (a_4)
			(a_4) edge (a_5)
			(a_5) edge (a_1)
			(a_6) edge (a_1)
			(a_6) edge (a_2)
			(a_6) edge (a_3)
			(a_6) edge (a_4)
			(a_6) edge (a_5);
		\end{tikzpicture}\]
	$G_a[C_5]$
	\end{minipage}%
	
	\caption{Replacing the vertex $a$ of $G$ with the module $C_5$} 
	\label{w_5}
\end{figure}
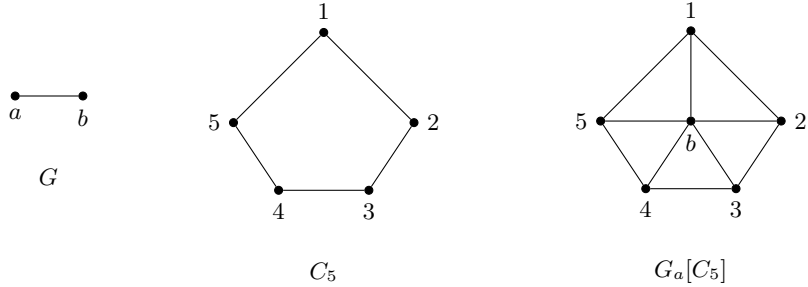

In what follows, $G$ and $M$ denote the graphs $G = (V \cup \{a\}, E)$ and $M = (V', E')$, unless stated otherwise. The following theorem gives us a sufficient condition for the word-representability of $G_a[M]$.  
 
\begin{theorem}[\cite{Kitaev_2013}]\label{Module_0}
	Suppose $G$ is a word-representable graph. If $M$ is a comparability graph, then $G_a[M]$ is word-representable. 
\end{theorem}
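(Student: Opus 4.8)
The plan is to build an explicit word representing $G_a[M]$ from a word representing $G$ and a permutational representation of $M$. Since $M$ is a comparability graph, by the result of~\cite{perkinsemigroup} it is permutationally representable, so fix a word $u = p_1 p_2 \cdots p_t$ representing $M$, where each $p_i$ is a permutation of $V'$. Since $G$ is word-representable, it is $k$-word-representable for some $k$~\cite{MR2467435}; moreover, by padding (a standard trick) we may assume $w$ is a $k$-uniform word representing $G$ with $k \ge t$, and in fact we can arrange $k$ to be a multiple of $t$, say $k = tq$. The idea is to take the $k$-uniform word $w$ over $V \cup \{a\}$ and, in each of the $k$ occurrences of $a$, substitute a permutation of $V'$, cycling through $p_1, \ldots, p_t$ (each used $q$ times), so that the letters of $V'$ collectively play the role of $a$ while also, among themselves, spelling out $q$ copies of $u$.

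The key steps, in order: (1) Take a $k$-uniform word $w$ representing $G$; relabel its occurrences of $a$ as $a^{(1)}, \ldots, a^{(k)}$ in order of appearance. (2) Form $w'$ over $V \cup V'$ by replacing the $j$-th occurrence $a^{(j)}$ with the block $p_{((j-1) \bmod t) + 1}$, i.e.\ a full permutation of $V'$. (3) Check the three types of vertex pairs: (a) for $b, c \in V$, the projection $w'_{\{b,c\}} = w_{\{b,c\}}$ is unchanged, so adjacency is preserved; (b) for $b \in V'$ and $c \in N_G(a)$ (respectively $c \notin N_G(a)$), the projection $w'_{\{b,c\}}$ is obtained from $w_{\{a,c\}}$ by replacing each $a$ with a single $b$ (since each inserted block contains exactly one $b$), so $b$ and $c$ alternate in $w'$ iff $a$ and $c$ alternate in $w$, matching the definition of $G_a[M]$; (c) for $b, c \in V'$, deleting all other letters from $w'$ leaves exactly the concatenation $u_{\{b,c\}}$ repeated $q$ times (each block $p_i$ contributes one $b$ and one $c$ in the order dictated by $p_i$), and since $u$ represents $M$, $b$ and $c$ alternate in $u$ iff they are adjacent in $M$; one checks that a $q$-fold concatenation of an alternating word is alternating and a $q$-fold concatenation of a non-alternating word stays non-alternating (here it is convenient to have each $u_{\{b,c\}}$ be an even-length alternating word or to note the concatenation of $p$-blocks preserves the relevant pattern). (4) Conclude $w'$ represents $G_a[M]$, hence it is word-representable.

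The main obstacle is step (3c): a bare concatenation $u u \cdots u$ of a word representing $M$ need not represent $M$, because if $u_{\{b,c\}} = bc$ (odd-length-looking but actually an even alternating word of length $2$) things are fine, but if some projection has odd length the concatenation can fail to alternate. This is precisely why we use a \emph{permutational} representation rather than an arbitrary one: when $u = p_1 \cdots p_t$ with each $p_i$ a permutation, every projection $u_{\{b,c\}}$ has even length $2t$, and concatenating $q$ copies yields an even-length word that alternates iff each copy does; equivalently, the $k$-uniform word $w$ for $G$ can itself be taken with $k$ a multiple of $t$ so that the cyclic assignment of the $p_i$ is balanced. Handling this parity bookkeeping cleanly — ensuring $k$ is chosen as a common multiple so the blocks $p_1, \ldots, p_t$ each appear the same number of times and the overall word stays well-defined — is the part that needs care; everything else is routine projection-chasing.
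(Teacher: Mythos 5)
Your proof is correct, and it is essentially the standard argument for this result: the paper itself only cites the statement from \cite{Kitaev_2013} without reproving it, but the same device---substituting a permutation of $V'$ for each occurrence of $a$---is exactly what the paper uses in its proof of Theorem~\ref{Module_4}, and your version correctly extends it to a non-permutational $k$-uniform word for $G$ by cycling through $p_1,\dots,p_t$. The parity issue you isolate in step (3c) is genuinely the crux and you resolve it properly: each block contributes exactly one $b$ and one $c$, so every projection $u_{\{b,c\}}$ has even length $2t$, alternation is preserved under $q$-fold concatenation, and non-alternation is already witnessed inside the first copy (so in fact $k\ge t$ suffices; the divisibility $t\mid k$ is just convenient bookkeeping).
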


Further, on the representation number of $G_a[M]$, it was stated in \cite{Kitaev_2013} that if $\mathcal{R}(G) = k$ and $\mathcal{R}(M) = k'$, then $\mathcal{R}(G_a[M]) = \max\{k, k'\}$. Although it was stated that $k'$ is the representation number of the module $M$, the concept of \textit{prn} of $M$ was used in the proof given in \cite{Kitaev_2013}. If $k' = \mathcal{R}(M)$, then the statement does not hold good, as shown in the following counterexample.      

\begin{example}
	Consider the graph obtained by replacing a vertex, say $a$, of $G = K_2$ with the cycle $C_6$. Note that $\mathcal{R}(G) = 1$ and $\mathcal{R}(C_6) = 2$ \cite[Chapter 3]{words&graphs}. But $\mathcal{R}(G_a[C_6]) \ne 2$. In fact, $G_a[C_6]$ is the wheel $W_6$ and its representation number is 3 (by \cite[Lemma 3]{Hallsorsson_2011}), as $\mathcal{R}^p(M) = 3$ (cf. \cite{mozhui2023}). 
\end{example}

In the following, we state the correct version for giving representation number of $G_a[M]$. 

\begin{theorem}\label{Module_1}
	Let $G$ be a word-representable graph and $M$ be a comparability graph. If $\mathcal{R}(G) = k$ and $\mathcal{R}^p(M) = k'$, then $\mathcal{R}(G_a[M]) = \max\{k, k'\}$. 
\end{theorem}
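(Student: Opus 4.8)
The plan is to establish the equality $\mathcal{R}(G_a[M]) = \max\{k, k'\}$ by proving the two inequalities separately. For the upper bound $\mathcal{R}(G_a[M]) \le \max\{k, k'\}$, I would take a $k$-uniform word $w$ representing $G$ and a permutational representation $q = q_1 q_2 \cdots q_{k'}$ of $M$ with $k'$ permutations. Writing $n = \max\{k, k'\}$, I would first pad each representation up to length $n$: since $G$ is word-representable and comparability graphs are permutationally representable, one can inflate $w$ to an $n$-uniform word (using that a $k$-word-representable graph is $\ell$-word-representable for all $\ell \ge k$, which follows from the standard cyclic-shift argument in \cite{MR2467435}) and one can inflate $q$ to $n$ permutations by repeating a permutation (repeating a permutation of $V'$ keeps all pairs in $M$ alternating, since a permutation puts every pair in an alternating pattern of length two, and concatenating identical permutations preserves this). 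Then the construction is: in the $n$-uniform word $w$ for $G$, replace the $i$-th occurrence of the letter $a$ by the $i$-th permutation block $q_i$ (listing the vertices of $V'$ in that order). Call the resulting word $w'$; it is an $n$-uniform word over $V \cup V'$.

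The core of the argument is then to verify that $w'$ represents $G_a[M]$, which breaks into three cases for a pair of letters $\{x, y\}$. If $x, y \in V$, then $w'_{\{x,y\}} = w_{\{x,y\}}$, so they alternate in $w'$ iff $\overline{xy}$ is an edge of $G[V]$, matching $G_a[M]$. If $x, y \in V'$, then $w'_{\{x,y\}} = q_{1,\{x,y\}} q_{2,\{x,y\}} \cdots q_{n,\{x,y\}}$, which is exactly $q_{\{x,y\}}$ up to the padding, so $x, y$ alternate iff $\overline{xy} \in E'$, matching the copy of $M$ inside $G_a[M]$. The interesting case is $x \in V$, $y \in V'$: here every occurrence of $y$ sits inside some block $q_i$, which occupies exactly the position of the $i$-th $a$ in $w$; hence $w'_{\{x,y\}}$ has the same alternation pattern as $w_{\{x,a\}}$ (each $a$ replaced by one copy of $y$), so $x$ and $y$ alternate in $w'$ iff $x$ and $a$ alternate in $w$, i.e.\ iff $x \in N_G(a)$, which is exactly the adjacency rule for $V \times V'$ edges in $G_a[M]$. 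This gives $\mathcal{R}(G_a[M]) \le n = \max\{k, k'\}$.

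For the lower bound $\mathcal{R}(G_a[M]) \ge \max\{k, k'\}$, observe that $G$ is an induced subgraph of $G_a[M]$ (take any single vertex of $V'$ together with $V$), and that $M$ is an induced subgraph of $G_a[M]$ (namely $G_a[M][V']$). Since the representation number is monotone under taking induced subgraphs, $\mathcal{R}(G_a[M]) \ge \mathcal{R}(G) = k$. For the bound involving $k'$, I would use that $M = G_a[M][V']$ is a module of $G_a[M]$ that is not just a single vertex, together with the fact that $G$ is connected with $a$ non-isolated (the blanket assumption of the paper), so some vertex $c \in V$ is adjacent to all of $V'$ in $G_a[M]$; restricting any word for $G_a[M]$ to $V' \cup \{c\}$ yields a representation of the graph $M$ with a dominating vertex attached, and I would invoke (as in Example~3, via \cite[Lemma 3]{Hallsorsson_2011} and the relation $\mathcal{R}(H \text{ with dominating vertex}) \ge \mathcal{R}^p(H)$ for comparability $H$) that this forces at least $\mathcal{R}^p(M) = k'$ occurrences. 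Hence $\mathcal{R}(G_a[M]) \ge \max\{k, k'\}$, and combined with the upper bound the theorem follows.

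The main obstacle I anticipate is the lower bound $\mathcal{R}(G_a[M]) \ge k'$: unlike the upper bound, which is a direct substitution argument, this requires genuinely relating the representation number of a graph containing $M$ as a module to the \emph{prn} (not merely the representation number) of $M$. The key insight is that attaching a vertex adjacent to all of $M$ forces the word restricted to $M$ to become \emph{permutational} in its alternation behaviour relative to that vertex, so that the minimum uniformity is governed by $\mathcal{R}^p(M)$ rather than $\mathcal{R}(M)$; making this precise is exactly where the comparability hypothesis on $M$ and the connectivity hypothesis on $G$ are used, and it is the step I would expect to require the most care.
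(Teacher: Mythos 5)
Your proposal is correct, and it follows essentially the route the paper relies on: the paper omits an explicit proof of Theorem~\ref{Module_1} (deferring to the construction in \cite{Kitaev_2013}), but your upper-bound argument --- pad both representations to $n=\max\{k,k'\}$ and substitute the $i$-th permutation block of $M$ for the $i$-th occurrence of $a$ --- is exactly the substitution used in the paper's proof of the permutational analogue (Theorem~\ref{Module_4}), and your lower bound via induced-subgraph monotonicity together with \cite[Lemma 3]{Hallsorsson_2011} applied to a neighbour of $a$ (which exists by the paper's connectivity assumption) is the same mechanism the paper invokes in its Example with $W_6$. You correctly identify why $\mathcal{R}^p(M)$ rather than $\mathcal{R}(M)$ is the right parameter, which is precisely the point of the paper's correction to \cite{Kitaev_2013}.
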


We prove that the comparability of $M$ is necessary for $G_a[M]$ to be word-representable graph.

\begin{theorem}\label{Module_3}
	 The graph $G_a[M]$ is word-representable if and only if $G$ is a word-representable graph and $M$ is a comparability graph. 
\end{theorem}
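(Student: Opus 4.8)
The forward implication is immediate from Theorem~\ref{Module_0}: if $G$ is word-representable and $M$ is a comparability graph, then $G_a[M]$ is word-representable. So all the content is in the converse, and the plan is to recover $G$ and a transitive orientation of $M$ from two induced subgraphs of $G_a[M]$, using that word-representability is hereditary (restricting a word representing $G_a[M]$ to the letters of an induced subgraph represents that subgraph).

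To recover $G$, pick any $v'\in V'$ and look at $G_a[M][V\cup\{v'\}]$. By the description of the edge set of $G_a[M]$, in this induced subgraph $v'$ is adjacent to exactly $N_G(a)$, so $G_a[M][V\cup\{v'\}]\cong G$; being an induced subgraph of the word-representable graph $G_a[M]$, it is word-representable, and hence so is $G$.

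To recover a transitive orientation of $M$: since $G$ is connected with at least two vertices, $N_G(a)\neq\varnothing$; fix $c\in N_G(a)$ and put $H:=G_a[M][V'\cup\{c\}]$, which is exactly $M$ with one extra vertex $c$ adjacent to every vertex of $M$. As an induced subgraph of $G_a[M]$, $H$ is word-representable, hence by the semi-transitivity characterization~\cite{Halldorsson_2016} it admits a semi-transitive orientation $\sigma$ --- acyclic, and shortcut-free in the sense that whenever $v_0\to v_1\to\cdots\to v_k$ is a directed path and $v_0v_k$ is an edge, every $v_iv_j$ with $i<j$ is an edge oriented $v_i\to v_j$. Split $V(M)$ as $A=\{v\mid c\to v\}$ and $B=\{v\mid v\to c\}$ under $\sigma$. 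I would then verify three facts about $\sigma$ restricted to $M$: (i) every edge of $M$ joining $A$ and $B$ is oriented from $B$ to $A$, since an edge $a\to b$ with $a\in A$, $b\in B$ would close the directed triangle $b\to c\to a\to b$, violating acyclicity; (ii) $\sigma$ restricted to $M[A]$ is transitive, because any directed path $a_0\to\cdots\to a_\ell$ inside $A$ extends to the directed path $c\to a_0\to\cdots\to a_\ell$ whose endpoints are joined by the edge $c\to a_\ell$, so shortcut-freeness forces $a_i\to a_j$ for all $i<j$; (iii) symmetrically $\sigma$ restricted to $M[B]$ is transitive, via the path $b_0\to\cdots\to b_\ell\to c$ and the edge $b_0\to c$. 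A short case analysis on where $u,v,w$ lie then shows that $\sigma$ restricted to $M$ is transitive: besides (ii) and (iii), every remaining configuration of a path $u\to v\to w$ with $u$ adjacent to $w$ is settled directly by (i), which either forces $uw$ to point $u\to w$ or rules the path out. Hence $M$ is a comparability graph, completing the converse.

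The genuinely delicate point is (ii)--(iii), i.e. the lemma that deleting a dominating vertex from a word-representable graph leaves a comparability graph; the semi-transitivity argument dispatches it while keeping everything self-contained, though one could instead invoke this lemma from \cite{words&graphs}. I would also state explicitly the hypothesis $N_G(a)\neq\varnothing$ used above (equivalently, that $G$ is not a single vertex): if $G=K_1$ then $G_a[M]=M$, and the statement would fail, for instance when $M=C_5$.
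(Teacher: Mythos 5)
Your overall architecture matches the paper's: the forward direction is Theorem~\ref{Module_0}, and for the converse you recover $G$ as an induced subgraph and reduce the comparability of $M$ to the fact that $M$ sits inside $G_a[M]$ as the full neighborhood of some $b\in N_G(a)$. The paper finishes at exactly this point by citing the result that in a word-representable graph the neighborhood of every vertex induces a permutationally representable (equivalently, comparability) graph (Theorem~9 of \cite{MR2467435}). You instead try to reprove that lemma by restricting a semi-transitive orientation $\sigma$ of $H=M+c$ to $M$, and this is where there is a genuine gap.

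The problem is in the final ``short case analysis'': you only check that for a directed path $u\to v\to w$ \emph{with $u$ adjacent to $w$} the edge $uw$ is forced to point from $u$ to $w$. Transitivity also requires that the edge $uw$ \emph{exist} whenever $u\to v\to w$, and semi-transitivity of $H$ does not supply this in the mixed cases. Concretely, let $M$ be the path $u-v-w$ (no edge $uw$) and $H=M+c$ with $c$ dominating; orient $u\to c$, $c\to v$, $c\to w$, $u\to v$, $v\to w$. This orientation is acyclic (order $u<c<v<w$) and has no shortcuts, since the only directed paths of length at least two whose endpoints are adjacent are $u\to c\to v$ and $c\to v\to w$, and both triples induce transitive tournaments; every path ending in the pair $\{u,w\}$ imposes no condition because $uw$ is a non-edge. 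Here $A=\{v,w\}$, $B=\{u\}$, your facts (i)--(iii) all hold, and yet the restriction $u\to v\to w$ to $M$ is not transitive. So the restriction of a semi-transitive orientation of $H$ to the neighborhood of $c$ need not be a transitive orientation, even when that neighborhood is a comparability graph (as $P_3$ is). Your steps (i)--(iii) are correct as far as they go, but they do not bridge the cases where $u$ and $w$ lie in different parts (or in the same part reached through the other part), because nothing forces the missing chord. To close the argument you should do what the paper does and invoke \cite[Theorem~9]{MR2467435} (whose proof is word-based: the occurrences of the neighbors of $b$ between consecutive occurrences of $b$ yield the required sequence of permutations), or reproduce that word-based argument; the orientation-restriction route as written does not work. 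Your side remark that the converse needs $N_G(a)\neq\varnothing$ (i.e.\ $G\neq K_1$) is a fair observation and is covered by the paper's standing connectedness assumption.
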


\begin{proof}
	If $G$ is a word-representable graph and $M$ is a comparability graph, then by Theorem \ref{Module_0}, $G_a[M]$ is a word-representable graph. Conversely, suppose $G_a[M]$ is a word-representable graph. Then, $G$ is word-representable as $G$ is isomorphic to an induced subgraph of $G_a[M]$. 
	Since $G$ is connected, there exists a vertex, say $b$ of $G$, such that $b \in N_G(a)$. Then, from the construction of $G_a[M]$ it is evident that $b$ is adjacent to all vertices of $M$. Consider the induced subgraph $G_1 = (G_a[M])[V' \cup \{b\}]$. Clearly, $G_1$ is a word-representable graph. Further, note that $N_{G_1}(b)$ is $M$. Then, in view of \cite[Theorem 9]{MR2467435}, $M$ is a comparability graph. \qed
\end{proof}

In the following theorem, we provide a characterization for $G_a[M]$ to be a comparability graph. We also present the \textit{prn} of $G_a[M]$.   

\begin{theorem}\label{Module_4}
	The graph $G_a[M]$ is a comparability graph if and only if $G$ and $M$  are comparability graphs. Moreover, if $\mathcal{R}^p(G) = k$ and $\mathcal{R}^p(M) = k'$, then $\mathcal{R}^p(G_a[M]) = \max\{k, k'\}$.
\end{theorem}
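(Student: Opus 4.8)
The plan is to prove both directions of the characterization, then handle the \textit{prn} formula by matching upper and lower bounds.

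For the \emph{characterization part}, the forward direction is essentially free: since $G$ and $M$ are each isomorphic to an induced subgraph of $G_a[M]$ (the first by collapsing $M$ back to a single vertex, which is legitimate because $V'$ is a module, and the second directly as $G_a[M][V']=M$), and since induced subgraphs of comparability graphs are comparability graphs, comparability of $G_a[M]$ forces comparability of both $G$ and $M$. For the converse, suppose $G$ and $M$ are comparability graphs. Fix transitive orientations $\sigma_G$ of $G$ and $\sigma_M$ of $M$. I would orient $G_a[M]$ as follows: edges inside $G[V]$ keep their $\sigma_G$-direction; edges inside $M$ keep their $\sigma_M$-direction; and for an edge $\overline{bc}$ with $b\in V'$, $c\in N_G(a)$, orient it as $\overrightarrow{bc}$ if $\sigma_G$ orients $\overrightarrow{ac}$, and as $\overrightarrow{cb}$ if $\sigma_G$ orients $\overrightarrow{ca}$ — i.e.\ the module $M$ inherits the orientation pattern of the vertex $a$. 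The task is then to verify transitivity by checking all cases for a directed path $\overrightarrow{xy},\overrightarrow{yz}$ according to where $x,y,z$ fall (inside $V$, inside $V'$, or split between them). The cases with all three in $V$ or all three in $V'$ are immediate; the mixed cases reduce to transitivity of $\sigma_G$ after replacing any vertex of $V'$ by $a$, using the module property of $V'$ to guarantee the required cross edges exist with the correct orientation. The one subtlety is a path that enters $V'$, moves within $V'$, and leaves $V'$ again (i.e.\ $x,z\in V$, $y\in V'$): here $\overrightarrow{xy}$ and $\overrightarrow{yz}$ both come from the cross-edge rule, so $\sigma_G$ has $\overrightarrow{xa}$ and $\overrightarrow{az}$, hence $\overrightarrow{xz}$ in $\sigma_G$, giving the needed edge.

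For the \emph{\textit{prn} formula}, write $k=\mathcal{R}^p(G)$, $k'=\mathcal{R}^p(M)$, $n=\max\{k,k'\}$. The lower bound $\mathcal{R}^p(G_a[M])\ge n$ is immediate: $G$ and $M$ are induced subgraphs of $G_a[M]$ and the \textit{prn} is monotone under taking induced subgraphs (equivalently, via the poset-dimension correspondence $\mathcal{R}^p(H)=\dim(P_H)$ and the fact that dimension does not increase when passing to an induced subposet). For the upper bound, I would take a permutational representation $w = p_1\cdots p_k$ of $G$ (padding with copies of a fixed permutation, or using that $G$ is permutationally $k$-representable for every $k\ge \mathcal{R}^p(G)$, so we may assume it uses exactly $n$ permutations) and a permutational representation $q_1\cdots q_n$ of $M$ (again padded up to $n$ permutations). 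Now build a word on $V\cup V'$ with $n$ blocks: in the $i$-th block, take the permutation $p_i$ of $V\cup\{a\}$ and substitute, in place of the single letter $a$, the permutation $q_i$ of $V'$. Each resulting block is a permutation of $V\cup V'$, so the word is permutationally $n$-representable; it remains to check it represents $G_a[M]$. Two vertices of $V$ alternate iff they did in $w$ (the substitution does not disturb their relative order), matching adjacency in $G$; two vertices of $V'$ alternate iff they did in $q_1\cdots q_n$, matching adjacency in $M$; and a vertex $c\in V$ together with a vertex $b\in V'$ alternate iff $c$ and $a$ alternated in $w$, since in each block $b$ occupies (part of) the slot formerly held by $a$ — matching the rule that $b\in N_{G_a[M]}(c)$ iff $c\in N_G(a)$. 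Hence $\mathcal{R}^p(G_a[M])\le n$, and equality follows.

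The main obstacle is the transitivity verification in the converse of the characterization: the cross-edge orientation rule must be shown to interact correctly with both $\sigma_G$ and $\sigma_M$ in every configuration of a length-two directed path, and the genuinely new case (path in, within, out of $V'$) is where one must invoke transitivity of $\sigma_G$ at the collapsed vertex $a$ rather than inside $M$. Once the orientation is described and the module property of $V'$ is used systematically to lift cross edges to edges at $a$, each case is routine, but the bookkeeping of cases is the part that needs care. The \textit{prn} upper bound is comparatively mechanical once one is comfortable padding permutational representations to a common length and performing the blockwise substitution, which is exactly the construction implicit in the proof of Theorem~\ref{Module_1}.
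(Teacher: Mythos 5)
Your proposal is correct, and the \textit{prn} half of it coincides with the paper's argument: pad the two permutational representations to the common length $n=\max\{k,k'\}$ by repeating the last permutation, write $p_i=r_i a s_i$, and substitute $q_i$ for the letter $a$ to get blocks $r_i q_i s_i$; the alternation checks you list (within $V$, within $V'$, and across) are exactly those in the paper, as is the lower bound via induced subgraphs. Where you diverge is the converse of the characterization. The paper does not construct a transitive orientation at all: it observes that the substituted word $v$ represents $G_a[M]$ \emph{permutationally}, and permutational representability is equivalent to being a comparability graph (the Kitaev--Seif result cited in the introduction), so the single word construction delivers both the comparability of $G_a[M]$ and the upper bound $\mathcal{R}^p(G_a[M])\le n$ in one stroke. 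Your alternative --- orienting cross edges $\overline{bc}$, $b\in V'$, $c\in N_G(a)$, according to the $\sigma_G$-direction of $\overline{ac}$, and checking transitivity over all placements of a directed two-path --- is sound; I verified the cases, including the one you flag ($x,z\in V$, $y\in V'$, which reduces to $\overrightarrow{xa},\overrightarrow{az}\Rightarrow\overrightarrow{xz}$ in $\sigma_G$) and the vacuous configuration $x,z\in V'$, $y\in V$ (which would force both $\overrightarrow{ay}$ and $\overrightarrow{ya}$). What the orientation route buys is a purely order-theoretic proof that does not pass through the word/comparability equivalence; what it costs is redundancy, since your own \textit{prn} construction already certifies comparability, so the case analysis could be deleted without loss.
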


\begin{proof}
	Suppose, $G_a[M]$ is a comparability graph. Since, $G$ and $M$ are isomorphic to certain induced subgraphs of $G_a[M]$, $G$ and $M$  are comparability graphs. Conversely, suppose $G$ and $M$  are comparability graphs.  Let the words $p_1p_2 \cdots p_{k}$ and $p'_1p'_2 \cdots p'_{k'}$ represent the graphs $G$ and $M$ respectively, where each $p_i$ $(1 \le i \le k)$ is a permutation on the vertices of $G$ and each $p'_i$ $(1 \le i \le k')$ is a permutation on the vertices of $M$. 
	
	Suppose $\max\{k, k'\} = t$. If $k' < k$, then set $p'_j = p'_{k'}$ for all $k'+ 1 \le j \le t$ and note that $p'_1p'_2 \cdots p'_t$ represents $M$. Similarly, if $k < k'$, then set $p_j = p_{k}$ for all $k + 1 \le j \le t$ and note that $p_1p_2 \cdots p_t$ represents $G$. In any case, the words $w = p_1p_2 \cdots p_t$ and $w' = p'_1p'_2 \cdots p'_t$ represent the graphs $G$ and $M$, respectively. For $1 \le i \le t$, let $p_i = r_ias_i$ so that $p_{i_V} = r_is_i$.
	
	Let $v_i = r_ip'_is_i$, for all $1 \le i \le t$. Note that each $v_i$ is a permutation on the vertices of $G_a[M]$. We show that the word $v = v_1v_2 \cdots v_t$ represents the graph $G_a[M]$.
	
	Note that $G[V]$ and $M$ are induced subgraphs of $G_a[M]$. Further, since $v_{V} =  p_{1_V}p_{2_V} \cdots p_{t_V} = w_{V}$ and $v_{V'} = w'$, the subwords $v_{V}$ and $v_{V'}$ of $v$ represent the graphs $G[V]$ and $M$, respectively. Thus, any two vertices of $G[V]$ (or any two vertices of $M$) are adjacent if and only if they alternate in the word $v$.
	
	 Let $b, b'$ be two vertices of $G_a[M]$ such that $b \in V$ and $b' \in V'$. Then $b$ and $b'$ are adjacent in  $G_a[M]$ if and only if $b \in N_G(a)$. Further, note that each $v_i$ is constructed from $p_i$ by replacing $a$ with $p'_i$. Then, it is easy to see that for every $b' \in V'$, $b'$ alternates with $b \in V$ in $v$  if and only if $b$ alternates with $a$ in $w$. Thus, for every $b' \in V'$ and $b \in V$, $b'$ alternates with $b$ in $v$ if and only if  $b \in N_G(a)$. Hence, $v$ represents the graph $G_a[M]$ permutationally.
	
	Therefore, we have $\mathcal{R}^p(G_a[M]) \le t = \max\{k, k'\}$. Further, note that $G$ and $M$ are isomorphic to certain induced subgraphs of $G_a[M]$ so that $\mathcal{R}^p(G_a[M]) \geq t$. Hence, $\mathcal{R}^p(G_a[M]) = \max\{k, k'\}$. \qed	
\end{proof}

As a consequence of the above results, we now study the word-representability of the lexicographical product of any two graphs.

Let $G = (V, E)$ and $G' = (V', E')$ be two graphs. The lexicographical product of $G$ and $G'$, denoted by $G[G']$, defined by $G[G'] = (V'', E'')$, where the vertex set $V'' = V \times V'$ and the edge set $E'' = \{\overline{(a, a')(b, b')} \mid \overline{ab} \in E \ \text{or} \ (a = b, \ \overline{a'b'} \in E')\}$. 

The word-representability of lexicographical product of graphs is an open problem posed in \cite[Chapter 7]{words&graphs}. In connection to this problem,  it was shown in \cite[Section 6]{choi2019operations} that the class of word-representable graphs is not closed under the lexicographical product by constructing an explicit example. 

First, observe the relation between lexicographical product and the operation of module replacing a vertex as per the following remark.

\begin{remark}\label{lexi_module}
	The lexicographical product	$G[G']$ is nothing but replacing each vertex of $G$ by the module $G'$. Hence, both $G$ and $G'$ are induced subgraphs of $G[G']$.
\end{remark}

Accordingly, in the following theorem, we provide a necessary and sufficient condition for $G[G']$ to be a word-representable graph. Thus, we settle the above-mentioned open problem.  

\begin{theorem}\label{lex_word}
	Let $G$ and $G'$ be two graphs. The lexicographical product $G[G']$ is word-representable if and only if $G$ is word-representable and $G'$ is a comparability graph. Moreover, if $\mathcal{R}(G) = k$ and $\mathcal{R}^p(G') = k'$, then $\mathcal{R}(G[G']) = \max\{k, k'\}$.
\end{theorem}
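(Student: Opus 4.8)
The plan is to reduce Theorem \ref{lex_word} to the iterated application of the single-vertex replacement results already established, namely Theorems \ref{Module_1}, \ref{Module_3}, and \ref{Module_4}. By Remark \ref{lexi_module}, the lexicographical product $G[G']$ is obtained from $G$ by simultaneously replacing each vertex of $G$ with a copy of the module $G'$. Writing $V(G) = \{a_1, \ldots, a_n\}$, one can realize this as a finite sequence of single-vertex replacements: start from $G$, replace $a_1$ with $G'$ to get $G^{(1)} = G_{a_1}[G']$, then replace $a_2$ (a vertex of $G$, still present in $G^{(1)}$) with $G'$, and so on, obtaining $G^{(n)} = G[G']$ after $n$ steps. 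The key observation making the induction work is that after replacing some of the vertices, the remaining un-replaced vertices of $G$ are still vertices of the intermediate graph, and each such vertex is \emph{global} with respect to the already-inserted modules (either adjacent to all or none), so inserting $G'$ there is again an instance of the $G_a[M]$ operation; the hypotheses of Theorems \ref{Module_3} and \ref{Module_1} are met at every stage provided the running graph $G^{(i)}$ stays word-representable, which is exactly what the induction delivers.

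For the characterization direction, suppose $G[G']$ is word-representable. Since both $G$ and $G'$ are induced subgraphs of $G[G']$ by Remark \ref{lexi_module}, $G$ is word-representable; and applying Theorem \ref{Module_3} to the first replacement step (or directly: pick an edge of $G$, which exists since $G$ is connected, and the corresponding structure inside $G[G']$ is exactly $G'_a[\,\cdot\,]$ with a vertex adjacent to all of one copy of $G'$) forces $G'$ to be a comparability graph. Here I would be slightly careful: if $G$ is a single vertex then $G[G'] = G'$ and the statement is trivial; if $G$ has at least one edge, the argument in the proof of Theorem \ref{Module_3} transfers verbatim. Conversely, if $G$ is word-representable and $G'$ is a comparability graph, then by the inductive replacement described above — each step invoking Theorem \ref{Module_3} for word-representability — every $G^{(i)}$ is word-representable, hence so is $G^{(n)} = G[G']$.

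For the representation number, I would run the same induction quantitatively. Set $k = \mathcal{R}(G)$ and $k' = \mathcal{R}^p(G')$. After the first replacement, Theorem \ref{Module_1} gives $\mathcal{R}(G^{(1)}) = \mathcal{R}(G_{a_1}[G']) = \max\{k, k'\}$. For the inductive step, when replacing $a_{i+1}$ in $G^{(i)}$ by $G'$: the module being inserted is still $G'$ with $\mathcal{R}^p(G') = k'$, and by the induction hypothesis $\mathcal{R}(G^{(i)}) = \max\{k, k'\}$, so Theorem \ref{Module_1} yields $\mathcal{R}(G^{(i+1)}) = \max\{\max\{k,k'\}, k'\} = \max\{k, k'\}$. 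Hence $\mathcal{R}(G[G']) = \max\{k, k'\}$, completing the proof. The main obstacle to watch is the bookkeeping in the inductive step: one must verify that when $a_{i+1}$ is replaced, it genuinely plays the role of the distinguished vertex $a$ in the $G_a[M]$ construction relative to $G^{(i)}$ — that is, that $N_{G^{(i)}}(a_{i+1})$ is a union of full copies of $G'$ together with un-replaced vertices of $G$, so that inserting $G'$ there produces precisely $(G^{(i)})_{a_{i+1}}[G']$ and no spurious adjacencies arise. This follows from the module structure (a vertex of $G$ adjacent to $a_{i+1}$ in $G$ is adjacent to \emph{all} of the inserted copy, by definition of the lexicographical product), but it should be stated explicitly. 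A cleaner alternative, avoiding iteration entirely, is to mimic the proof of Theorem \ref{Module_4}: take a $k$-uniform word representing $G$ and a permutation word $p'_1 \cdots p'_{k'}$ representing $G'$, pad both to length $t = \max\{k,k'\}$, and substitute the $i$-th permutation block of $G'$ in place of each occurrence of the corresponding vertex of $G$ in the $i$-th ``layer'' of the word for $G$; verifying alternation then splits into the within-copy case (reduces to $G'$), the between-copy case (reduces to $G$), and is routine. I would present the iterative proof as the main line since it reuses the already-proven theorems with minimal new work.
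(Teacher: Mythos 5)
Your proposal is correct and follows essentially the same route as the paper: the paper's proof of Theorem \ref{lex_word} is exactly the one-line deduction from Remark \ref{lexi_module} together with Theorems \ref{Module_1} and \ref{Module_3}, which implicitly is the iterated single-vertex replacement you spell out. You merely make explicit the induction and the bookkeeping (that each intermediate replacement is a genuine instance of the $G_a[M]$ operation) that the paper leaves to the reader.
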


\begin{proof}
	The proof follows from Remark \ref{lexi_module}, Theorem \ref{Module_1} and Theorem \ref{Module_3}. \qed
\end{proof}

We further state a characteristic property for $G[G']$ to be a comparability graph. 

\begin{theorem}
	Let $G$ and $G'$ be two graphs. The lexicographical product $G[G']$ is a comparability graph if and only if $G$ and $G'$ are comparability graphs. Moreover, if $\mathcal{R}^p(G) = k$ and $\mathcal{R}^p(G') = k'$, then $\mathcal{R}^p(G[G']) = \max\{k, k'\}$.
\end{theorem}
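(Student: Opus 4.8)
\emph{Proof proposal.} The plan is to derive this statement from Theorem~\ref{Module_4}, in exact parallel with the way Theorem~\ref{lex_word} was obtained from Theorems~\ref{Module_1} and~\ref{Module_3}: iterate the single-vertex replacement over all vertices of $G$ and invoke Remark~\ref{lexi_module}. Concretely, write $V(G) = \{a_1, \ldots, a_n\}$ and build a chain of graphs $H_0, H_1, \ldots, H_n$ by setting $H_0 = G$ and $H_j = (H_{j-1})_{a_j}[G']$ for $1 \le j \le n$; thus $H_j$ is $G$ with the vertices $a_1, \ldots, a_j$ replaced, one after another, by copies of $G'$. A routine check from the defining property of a module and the definition of the lexicographical product shows that at each step $a_j$ still occurs in $H_{j-1}$ with neighbourhood equal to the image of $N_G(a_j)$ under the replacements already performed, so that $H_n \cong G[G']$; this is precisely the content of Remark~\ref{lexi_module}.

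For the characterization, the forward implication is immediate: by Remark~\ref{lexi_module} both $G$ and $G'$ are induced subgraphs of $G[G']$, and comparability is a hereditary property, so $G[G']$ being a comparability graph forces $G$ and $G'$ to be comparability graphs. For the converse I would induct on $j$ to show that each $H_j$ is a comparability graph: $H_0 = G$ is one by hypothesis, and if $H_{j-1}$ is a comparability graph then, since $G'$ is too, Theorem~\ref{Module_4} gives that $H_j = (H_{j-1})_{a_j}[G']$ is a comparability graph; taking $j = n$ yields $G[G'] \cong H_n$ is a comparability graph. (Alternatively, this equivalence follows in one stroke from the classical Theorem~\ref{mod_com} applied to the modular partition $\{\{a\} \times V(G') \mid a \in V(G)\}$ of $G[G']$, whose quotient is $G$ and all of whose parts induce copies of $G'$; but the \textit{prn} formula below still needs the iteration.)

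For the \textit{prn} formula, assume $G$ and $G'$ are comparability graphs with $\mathcal{R}^p(G) = k$ and $\mathcal{R}^p(G') = k'$. Since restricting each permutation of a representing word to the vertex set of an induced subgraph produces a permutational representation of that subgraph with the same number of permutations, $\mathcal{R}^p$ does not increase under taking induced subgraphs; combined with Remark~\ref{lexi_module} this gives $\mathcal{R}^p(G[G']) \ge \max\{k, k'\}$. For the reverse inequality I would run the \emph{moreover} part of Theorem~\ref{Module_4} along the chain: $\mathcal{R}^p(H_j) = \max\{\mathcal{R}^p(H_{j-1}), k'\}$ for $1 \le j \le n$. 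As $\mathcal{R}^p(H_0) = k$, an easy induction using $\max\{\max\{k, k'\}, k'\} = \max\{k, k'\}$ shows $\mathcal{R}^p(H_j) = \max\{k, k'\}$ for every $j \ge 1$, hence $\mathcal{R}^p(G[G']) = \mathcal{R}^p(H_n) = \max\{k, k'\}$. The degenerate cases $n = 1$ (where $G = K_1$, $k = 1 \le k'$, $G[G'] = G'$) and $|V(G')| = 1$ (where $G[G'] = G$) are consistent with the formula.

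The main obstacle is bookkeeping rather than depth: one must verify carefully that the intermediate graphs $H_j$ genuinely are the ``partial'' lexicographical products, i.e. that the vertex replaced at step $j$ has exactly the neighbourhood demanded by $G[G']$ once the earlier replacements are accounted for, so that Theorem~\ref{Module_4} is being applied to the correct ambient graph; and the two inductions — comparability of $H_j$, and the value $\mathcal{R}^p(H_j)$ — must be kept in step, since Theorem~\ref{Module_4} requires $H_{j-1}$ to already be known a comparability graph before it can be invoked at stage $j$.
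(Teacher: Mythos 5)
Your proposal is correct and follows exactly the route the paper intends: the paper states this theorem without proof, immediately after deriving Theorem~\ref{lex_word} from Remark~\ref{lexi_module} together with Theorems~\ref{Module_1} and~\ref{Module_3}, and the evident intended argument is the same iterated vertex-replacement with Theorem~\ref{Module_4} in place of those two results. Your write-up supplies the bookkeeping (the chain $H_0,\ldots,H_n$ and the two parallel inductions) that the paper leaves implicit, and it is sound.
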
 

We are now ready to present the main result of the paper on  
characterization of word-representable graphs with respect to the modular decomposition, through the following lemma.  

\begin{lemma}\label{mod_word_rp}
	Let $G$ be a word-representable graph and $M$ be any non-trivial module of $G$. Then the induced subgraph $G[M]$ is a comparability graph.
\end{lemma}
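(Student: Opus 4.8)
The plan is to reduce the general statement to the single-vertex replacement situation that Theorem~\ref{Module_3} already handles. First I would recall that a non-trivial module $M$ of $G$ is, by definition, a proper subset of the vertex set containing at least two vertices, and that $G[V \setminus M]$ together with $M$ behaves exactly like the construction $G_a[M]$: collapsing the module $M$ to a single vertex $a$ yields a graph $H$ (on vertex set $(V \setminus M) \cup \{a\}$) such that $G$ is isomorphic to $H_a[G[M]]$, where $H$ plays the role of the ambient graph and $G[M]$ the replacing module. The only subtlety is that the excerpt's framework assumes all graphs are \emph{connected}; since $G$ is connected and $M$ is a module, the quotient-type graph $H$ is connected as well, and if $G[M]$ happened to be disconnected one would still want the argument of Theorem~\ref{Module_3} to apply — so I would first check that the proof of Theorem~\ref{Module_3} only used connectivity of the ambient graph $G$ (to find a neighbour $b$ of $a$), which it did, and not connectivity of $M$.

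The key steps, in order, would be: (1) write $V = M \sqcup W$ with $W = V \setminus M$ nonempty (as $M$ is proper) and $|M| \ge 2$ (as $M$ is non-trivial and not a singleton); (2) form $H = (W \cup \{a\}, E_H)$ where $a$ is a new vertex, $H[W] = G[W]$, and $a$ is adjacent in $H$ precisely to those $w \in W$ with $M \subseteq N_G(w)$ (this is well-defined exactly because $M$ is a module); (3) observe that $G \cong H_a[G[M]]$ directly from the definition of the replacement operation, since the edges of $G$ split into edges within $W$, edges within $M$, and edges between a vertex of $M$ and a vertex $w \in W$, the last occurring for \emph{all} vertices of $M$ simultaneously whenever $w \in N_H(a)$; (4) since $G$ is word-representable, the graph $H_a[G[M]]$ is word-representable, so by Theorem~\ref{Module_3} the module $G[M]$ must be a comparability graph. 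That is exactly the conclusion.

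I would phrase step~(4) carefully to invoke Theorem~\ref{Module_3} in the ``only if'' direction: $H_a[G[M]]$ word-representable implies (in particular) that $G[M]$ is a comparability graph. Note that Theorem~\ref{Module_3} also needs $H$ connected, which holds because contracting a connected-graph's module to a point cannot disconnect what remains connected through it; but even this is not strictly needed — all that the proof of Theorem~\ref{Module_3} required was the existence of some $b \in W$ adjacent to $a$ in $H$, i.e.\ some vertex outside $M$ adjacent to all of $M$ in $G$. If $M$ is adjacent to nothing outside itself, then since $G$ is connected and $M \ne V$ this is impossible unless $W = \varnothing$; so such a $b$ exists. I would state this as a one-line justification rather than belabour it.

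The main obstacle — really the only thing to be careful about — is the bookkeeping around the ``connected graphs only'' convention of the paper: one must make sure that the auxiliary graph $H$ and the module $G[M]$ fit the hypotheses under which Theorem~\ref{Module_3} (equivalently Theorem~\ref{Module_0}) was proved, and in particular that the argument does not secretly assume $G[M]$ is connected. Since the ``only if'' direction of Theorem~\ref{Module_3} uses only connectivity of the \emph{ambient} graph (to produce a common neighbour $b$ of the module), and we have verified such a $b$ exists, the reduction goes through. Everything else is a routine unwinding of the definition of a module and of the vertex-replacement operation.
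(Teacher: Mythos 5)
Your proposal is correct and follows essentially the same route as the paper: collapse the module $M$ to a single vertex to obtain an auxiliary graph $H$ with $G \cong H_a[G[M]]$, then apply the ``only if'' direction of Theorem~\ref{Module_3}. Your extra care about connectivity and the existence of a common neighbour $b$ of $M$ is a reasonable (and harmless) elaboration of details the paper leaves implicit.
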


\begin{proof}
	Consider the graph $G'$ which is obtained from $G$ by replacing the module $M$ by a new vertex, say $a$. Thus, $G'$ is word-representable as it is isomorphic to an induced subgraph of $G$. Note that $G$ can be reconstructed from $G'$ by replacing the vertex $a$ with the module $G[M]$. Hence, by Theorem \ref{Module_3}, we have $G[M]$ is a comparability graph. \qed
\end{proof}

\begin{theorem}\label{ch_mod_wordgraph}
	Let $G$ be a decomposable graph and $\mathcal{P} = \{M_1, M_2, \ldots, M_k\}$ be a modular partition of $G$. Then, we have the following:
	\begin{enumerate}
		\item $G$ is word-representable if and only if, for each $1 \le i \le k$, $G[M_i]$ is a comparability graph and $\sfrac{G}{\mathcal{P}}$ is a word-representable graph. 
		
		\item If $G$ is word-representable, then \[\mathcal{R}(G) = \max\{\mathcal{R}(\sfrac{G}{\mathcal{P}}), \mathcal{R}^p(G[M_1]), \ldots, \mathcal{R}^p(G[M_k])\}.\]
		
		\item If $G$ is a comparability graph, then \[\mathcal{R}^p(G) = \max\{\mathcal{R}^p(\sfrac{G}{\mathcal{P}}), \mathcal{R}^p(G[M_1]), \ldots, \mathcal{R}^p(G[M_k])\}.\]
	\end{enumerate}
\end{theorem}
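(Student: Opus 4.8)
The plan is to bootstrap from the two-block case (Theorems~\ref{Module_1}, \ref{Module_3}, and \ref{Module_4}) to an arbitrary modular partition $\mathcal{P} = \{M_1, \ldots, M_k\}$ by peeling off one module at a time. Concretely, I would introduce a sequence of intermediate graphs: let $H_0 = \sfrac{G}{\mathcal{P}}$, and for $1 \le i \le k$ let $H_i$ be the graph obtained from $H_{i-1}$ by replacing the vertex corresponding to $M_i$ with the induced subgraph $G[M_i]$. By Remark~\ref{recons_mod_dec}, $H_k = G$, and each step $H_{i-1} \rightsquigarrow H_i$ is exactly an instance of the operation $(-)_a[M]$ studied above, since the vertex replaced is a single vertex of $H_{i-1}$ and $G[M_i]$ is placed in as a module. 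I would note explicitly that $H_{i-1}$ plays the role of $G$ and $G[M_i]$ the role of $M$ in the earlier theorems, which requires checking that the vertex being replaced is a genuine vertex of $H_{i-1}$ (the already-expanded modules $M_1,\dots,M_{i-1}$ are untouched) — this is immediate from the definition of the $H_i$.

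For part~(1): if $G$ is word-representable, then each $G[M_i]$ is a comparability graph by Lemma~\ref{mod_word_rp}, and $\sfrac{G}{\mathcal{P}}$ is word-representable since it is isomorphic to an induced subgraph of $G$. Conversely, suppose each $G[M_i]$ is a comparability graph and $\sfrac{G}{\mathcal{P}} = H_0$ is word-representable; then an induction on $i$ using Theorem~\ref{Module_3} shows each $H_i$ is word-representable, so $H_k = G$ is word-representable. For part~(2): run the same induction but track representation numbers via Theorem~\ref{Module_1}. The base case gives $\mathcal{R}(H_0) = \mathcal{R}(\sfrac{G}{\mathcal{P}})$, and the inductive step gives $\mathcal{R}(H_i) = \max\{\mathcal{R}(H_{i-1}), \mathcal{R}^p(G[M_i])\}$; unwinding yields $\mathcal{R}(G) = \max\{\mathcal{R}(\sfrac{G}{\mathcal{P}}), \mathcal{R}^p(G[M_1]), \ldots, \mathcal{R}^p(G[M_k])\}$. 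Part~(3) is identical but uses Theorem~\ref{Module_4} in place of Theorems~\ref{Module_1} and~\ref{Module_3}, with $\mathcal{R}^p$ everywhere: $G$ is a comparability graph iff every $G[M_i]$ and $\sfrac{G}{\mathcal{P}}$ are, and the $\mathcal{R}^p$ formula follows by the same telescoping.

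The one genuinely delicate point — and the step I expect to need the most care — is verifying that the earlier two-block theorems apply verbatim at each stage, i.e. that replacing $M_i$ in $H_{i-1}$ does not interact with the modules $M_j$ ($j \ne i$) in a way that changes adjacencies outside the intended ones. This is really the statement that the $M_i$ are pairwise disjoint modules of $G$ and that the reconstruction in Remark~\ref{recons_mod_dec} is well-defined independent of the order of expansion; since $\mathcal{P}$ is a partition into modules, the vertex set of $H_i$ is $M_1 \cup \dots \cup M_i \cup \{$vertices of $\sfrac{G}{\mathcal{P}}$ for $M_{i+1},\dots,M_k\}$ and the edges are forced, so the operation at step $i$ is unambiguous. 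I would state this as a short lemma or an explicit remark before the induction so that the three parts then follow cleanly and uniformly. A minor secondary check is that $\mathcal{R}^p$ of an induced subgraph is a lower bound for $\mathcal{R}^p$ of the whole graph (used implicitly in the ``$\ge$'' directions of parts~(2) and~(3)), which is standard and already invoked in the proof of Theorem~\ref{Module_4}.
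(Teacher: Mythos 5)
Your proposal is correct and follows essentially the same route as the paper: the paper's proof simply compresses your peel-off induction into citations of Remark~\ref{recons_mod_dec} together with Theorems~\ref{Module_1}, \ref{Module_3} and \ref{Module_4} (and invokes the known Theorem~\ref{mod_com} for the comparability characterization in part~(3), which you instead rederive by the same induction). Your explicit sequence $H_0,\ldots,H_k$ and the telescoping of the $\max$ formulas is exactly the argument the paper leaves implicit.
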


\begin{proof}$\;$
	\begin{enumerate}
		\item Suppose $G$ is a word-representable graph. As $\sfrac{G}{\mathcal{P}}$ is an induced subgraph of $G$, it is an word-representable graph. Further, from Lemma \ref{mod_word_rp}, we have for each $1 \le i \le k$, $G[M_i]$ is a comparability graph. Note that the converse is evident from Remark \ref{recons_mod_dec} and Theorem \ref{Module_3}.
		
		\item If $G$ is a word-representable graph, then from the above result we have each $G[M_i]$ is a comparability graph and $\sfrac{G}{\mathcal{P}}$ is a word-representable graph. Hence, the result follows from Remark \ref{recons_mod_dec} and Theorem \ref{Module_1}.
		
		\item The result is evident from Theorem \ref{mod_com} and Theorem \ref{Module_4}. 
	\end{enumerate}
\qed
\end{proof}

\section{Concluding Remarks}
While recognizing word-representability of graphs is NP-complete \cite{Hallsorsson_2011}, it is interesting to study methods to prove the non-word-representability of a graph. To explore more in this direction, one may refer to \cite{kitaev_2024b}. In this work, we characterized word-representable graphs with respect to the modular decomposition. Further, it is known that the recognition of comparability graphs as well as the computation of modular decomposition of a graph can be done in polynomial time \cite{Golumbic_1977,McConnell_1994}. Hence, in view of Theorem \ref{ch_mod_wordgraph}, we have a polynomial time test for non-word-representability of a graph $G$: find the maximal modular partition of $G$ and check whether the modules are comparability graphs; if a non-comparability module is found, $G$ is not a word-representable graph. However, if all modules are comparability graphs, then the test gives no information. In that case, the problem is reduced to the problem of recognizing the word-representability of the associated quotient graph. Further, it is interesting to find special classes of word-representable graphs using modular decomposition.


\begin{thebibliography}{10}
	
	\bibitem{choi2019operations}
	I.~Choi, J.~Kim, and M.~Kim.
	\newblock On operations preserving semi-transitive orientability of graphs.
	\newblock {\em J. Comb. Optim.}, 37(4):1351--1366, 2019.
	
	\bibitem{Gallaipaper}
	T.~Gallai.
	\newblock Transitiv orientierbare {G}raphen.
	\newblock {\em Acta Math. Acad. Sci. Hungar.}, 18:25--66, 1967.
	
	\bibitem{Golumbic_1977}
	M.~C. Golumbic.
	\newblock The complexity of comparability graph recognition and coloring.
	\newblock {\em Computing}, 18(3):199--208, 1977.
	
	\bibitem{Golumbicbook_2004}
	M.~C. Golumbic.
	\newblock {\em Algorithmic graph theory and perfect graphs}, volume~57 of {\em
		Annals of Discrete Mathematics}.
	\newblock Elsevier Science B.V., Amsterdam, second edition, 2004.
	
	\bibitem{habib2010survey}
	M.~Habib and C.~Paul.
	\newblock A survey of the algorithmic aspects of modular decomposition.
	\newblock {\em Computer Science Review}, 4(1):41--59, 2010.
	
	\bibitem{Hallsorsson_2011}
	M.~M. Halld\'{o}rsson, S.~Kitaev, and A.~Pyatkin.
	\newblock Alternation graphs.
	\newblock In {\em Graph-theoretic concepts in computer science}, volume 6986 of
	{\em Lecture Notes in Comput. Sci.}, pages 191--202. Springer, Heidelberg,
	2011.
	
	\bibitem{Halldorsson_2016}
	M.~M. Halld\'{o}rsson, S.~Kitaev, and A.~Pyatkin.
	\newblock Semi-transitive orientations and word-representable graphs.
	\newblock {\em Discrete Appl. Math.}, 201:164--171, 2016.
	
	\bibitem{Kitaev_2013}
	S.~Kitaev.
	\newblock On graphs with representation number 3.
	\newblock {\em J. Autom. Lang. Comb.}, 18(2):97--112, 2013.
	
	\bibitem{words&graphs}
	S.~Kitaev and V.~Lozin.
	\newblock {\em Words and graphs}.
	\newblock Monographs in Theoretical Computer Science. An EATCS Series.
	Springer, Cham, 2015.
	
	\bibitem{MR2467435}
	S.~Kitaev and A.~Pyatkin.
	\newblock On representable graphs.
	\newblock {\em J. Autom. Lang. Comb.}, 13(1):45--54, 2008.
	
	\bibitem{perkinsemigroup}
	S.~Kitaev and S.~Seif.
	\newblock Word problem of the {P}erkins semigroup via directed acyclic graphs.
	\newblock {\em Order}, 25(3):177--194, 2008.
	
	\bibitem{kitaev_2024b}
	S.~Kitaev and H.~Sun.
	\newblock Human-verifiable proofs in the theory of word-representable graphs.
	\newblock {\em RAIRO-Theor. Inf. Appl.}, 58:9, 2024.
	
	\bibitem{McConnell_1994}
	R.~M. McConnell and J.~P. Spinrad.
	\newblock Linear-time modular decomposition and efficient transitive
	orientation of comparability graphs.
	\newblock In {\em Proceedings of the {F}ifth {A}nnual {ACM}-{SIAM} {S}ymposium
		on {D}iscrete {A}lgorithms ({A}rlington, {VA}, 1994)}, pages 536--545. ACM,
	New York, 1994.
	
	\bibitem{Mohring_1985}
	R.~H. M\"{o}hring.
	\newblock Algorithmic aspects of comparability graphs and interval graphs.
	\newblock In {\em Graphs and order ({B}anff, {A}lta., 1984)}, volume 147 of
	{\em NATO Adv. Sci. Inst. Ser. C: Math. Phys. Sci.}, pages 41--101. Reidel,
	Dordrecht, 1985.
	
	\bibitem{mozhui2023}
	K.~Mozhui and K.~Krishna.
	\newblock Graphs with permutation-representation number at most three.
	\newblock {\em arXiv:2307.00301}, 2023.
	
	\bibitem{khyodeno2}
	K.~Mozhui and K.~V. Krishna.
	\newblock On the permutation-representation number of bipartite graphs using
	neighborhood graphs.
	\newblock {\em arXiv:2311.13980}, 2023.
	
	\bibitem{yannakakis1982complexity}
	M.~Yannakakis.
	\newblock The complexity of the partial order dimension problem.
	\newblock {\em SIAM J. Algebraic Discrete Methods}, 3(3):351--358, 1982.
	
\end{thebibliography}
\end{document}